\newcommand{\bP}{\mathbb{P}}
\newcommand{\sP}{\mathsf{P}}
\newcommand{\sA}{\mathsf{A}}
\newcommand{\cS}{\mathcal{S}}
\newcommand{\bN}{\mathbb{N}}
\newcommand{\PGL}{\mathrm{PGL}}
\newcommand{\cO}{\mathcal{O}}
\newcommand{\supp}{\operatorname{supp}}
\newcommand{\Id}{\mathrm{Id}}
\newcommand{\bR}{\mathbb{R}}
\newcommand{\can}{\operatorname{can}}
\newcommand{\ord}{\operatorname{ord}}
\newcommand{\Vu}{\boldsymbol{u}}
\newcommand{\Vv}{\boldsymbol{v}}
\newcommand{\sF}{\mathsf{F}}
\newcommand{\sJ}{\mathsf{J}}
\numberwithin{equation}{section}
\theoremstyle{plain}
\newtheorem{theorem}{Theorem}[section]
\newtheorem{lemma}[theorem]{Lemma}
\newtheorem{mainth}{Theorem}
\theoremstyle{definition}
\newtheorem*{acknowledgement}{Acknowledgement}
\theoremstyle{remark}
\newtheorem{remark}[theorem]{Remark}
\begin{document}  
  
\title[An a priori bound of rational functions]{An a priori bound of 
rational functions on the Berkovich projective line}

\author[Y\^usuke Okuyama]{Y\^usuke Okuyama}
\address{
Division of Mathematics,
Kyoto Institute of Technology,
Sakyo-ku, Kyoto 606-8585 Japan}
\email{okuyama@kit.ac.jp}

\date{\today}

\begin{abstract}
We establish a locally uniform a priori bound on the dynamics of 
a rational function $f$ of degree $>1$ on the Berkovich projective
line over an algebraically closed field of any characteristic
that is complete with respect to a non-trivial and non-archimedean
absolute value, and deduce an equidistribution result for moving targets
towards the equilibrium (or canonical) measure $\mu_f$, 
under the no potentially good reductions condition. This partly answers
a question posed by Favre and Rivera-Letelier. 
\end{abstract}

\subjclass[2010]{Primary 37P50; Secondary 11S82}
\keywords{a priori bound, domaine singulier, equidistribution, moving targets,
no potentially good reductions, 
Berkovich projective line} 

\maketitle

\section{Introduction}\label{sec:intro}

Let $K$ be an algebraically closed field of {\itshape any} characteristic
that is complete with respect to a non-trivial and non-archimedean
absolute value $|\cdot|$. 
The {\itshape Berkovich} projective line $\sP^1=\sP^1(K)$   
compactly augments the {\itshape classical} projective line
$\bP^1=\bP^1(K)$ (see \cite{Berkovichbook})
and is canonically regarded as a tree in the sense
of Jonsson \cite[Definition 2.2]{Jonsson15}, the topology of which
coincides with the (Gelfand) topology of $\sP^1$. 
The continuous action on $\bP^1$ of 
a rational function $h\in K(z)$ canonically extends to
that on $\sP^1$. If in addition $\deg h>0$, 
then this extended continuous action is also open and surjective, 
preserves both $\bP^1$ and $\sP^1\setminus\bP^1$, and 
satisfies $\# h^{-1}(\cS)\le\deg h$ for every $\cS\in\sP^1$.
More precisely, 
the local degree function $\deg_{\cdot}h:\bP^1\to\{1,\ldots,\deg h\}$
also canonically extends to an upper semicontinuous function 
$\sP^1\to\{1,\ldots,\deg h\}$ 
so that $\sum_{\cS'\in h^{-1}(\cS)}\deg_{\cS'}h=\deg h$ 
for every $\cS\in\sP^1$, 
and induces the pullback action $h^*$ of $h$
on the space of all Radon measures on $\sP^1$; 
letting $\delta_{\cS}$ be the Dirac measure on $\sP^1$
at each $\cS\in\sP^1$, 
$h^*\delta_{\cS}=\sum_{\cS'\in h^{-1}(\cS)}(\deg_{\cS'}h)\delta_{\cS'}$
on $\sP^1$.

By the seminal Baker--Rumely \cite{BR10}, 
Chambert-Loir \cite{ChambertLoir06}, and Favre--Rivera-Letelier \cite{FR09},
for every $f\in K(z)$ of degree $d>1$, there is the unique
{\itshape equilibrium $($or canonical$)$ measure} 
$\mu_f$ of $f$ on $\sP^1$, 
which has no masses on polar subsets in $\sP^1$ and  
satisfies the $f$-balanced property 
\begin{gather*}
 f^*\mu_f=d\cdot\mu_f\quad\text{on }\sP^1
\end{gather*}
(so in particular the $f$-invariance $f_*\mu_f=\mu_f$ on $\sP^1$).
Moreover, the equidistribution for {\itshape iterated pullbacks of points}
\begin{gather}
\lim_{n\to\infty}\frac{(f^n)^*\delta_{\cS}}{d^n}=\mu_f\quad\text{weakly on }\sP^1\label{eq:pullbacknonarch}
\end{gather}
holds for every $\cS\in\sP^1$ but the {\itshape at most countable} exceptional set 
$E(f):=\{a\in\bP^1:\#\bigcup_{n\in\bN\cup\{0\}}f^{-n}(a)<\infty\}$ 
of $f$; in particular, $\mu_f$ is mixing (so ergodic) under $f$.
If $\operatorname{char}K=0$, then $\#E(f)\le 2$.
In general, $E(f)$ is at most countable and $\bP^1\setminus E(f)$
is dense in $\bP^1$.

Our aim is to contribute to a locally uniform quantitative study 
and an equidistribution problem on the dynamics of $f$ on $\sP^1$.

\subsection{An {\itshape a priori} bound of the dynamics of $f$}\label{sec:boundnon-arch}\label{sec:bound}

Recall 
(that the absolute value $|\cdot|$ is said to be non-trivial 
if $|K|\not\subset\{0,1\}$ and)
that the absolute value $|\cdot|$ is said to be non-archimedean if
the {\itshape strong triangle inequality}
\begin{gather*}
 |z+w|\le\max\{|z|,|w|\}\quad\text{for any }z,w\in K 
\end{gather*}
holds. The (normalized) chordal metric $[z,w]_{\bP^1}$ on $\bP^1=\bP^1(K)$ 
(see \eqref{eq:chordalnonarch} for the definition. 
The notation is adopted from
Nevanlinna's and Tsuji's books \cite{Nevan70,Tsuji59})
is in particular written as
\begin{gather*}
 [z,w]_{\bP^1}=\frac{|z-w|}{\max\{1,|z|\}\max\{1,|w|\}}(\le 1)
\end{gather*}
for any $z,w\in K=\bP^1\setminus\{\infty\}$.
The projective transformations group on $\bP^1$ is identified
with $\PGL(2,K)$.

Let $f\in K(z)$ be a rational function on $\bP^1$ of degree $d>1$.
We say that $f$ {\em has no potentially good reductions}\footnote{We 
avoided the terminology ``{\em potential} good reduction'' since
it does not regard {\em potential} theory, which is one of main tools
in this article.}
if for every $h\in\PGL(2,K)$, $\deg(\widetilde{h\circ f\circ h^{-1}})<\deg f$,
where $\widetilde{h\circ f\circ h^{-1}}\in k(z)$ 
(of degree $\le\deg f$) denotes the {\itshape reduction} of 
$h\circ f\circ h^{-1}$ 
modulo the unique maximal ideal $\mathfrak{m}_K:=\{z\in K:|z|<1\}$ 
of the ring $\cO_K:=\{z\in K:|z|\le 1\}$ of $K$-integers, and
$k:=\cO_K/\mathfrak{m}_K$ is the residue field of $K$.
It is known that {\em $f$ has no potentially good reductions if and only if
$\mu_f(\{\cS\})=0$ for any $\cS\in\sP^1\setminus\bP^1$}
(cf.\ \cite[Corollary 10.33]{BR10}).

Our principal result is 
the following locally uniform {\itshape a priori} bound of the dynamics of $f$
{\em for moving targets}, under the no potentially good reductions condition.

\begin{mainth}\label{th:apriorinonarch}
Let $K$ be an algebraically closed field of any characteristic
that is complete with respect to a non-trivial and non-archimedean
absolute value. Then for every rational function $f\in K(z)$ on $\bP^1$
of degree $d>1$ having no potentially good reductions, 
every rational function $g\in K(z)$ on $\bP^1$ 
of degree $>0$, and every non-empty open subset $D$ in $\bP^1$, we have
\begin{gather}
\lim_{n\to\infty}\frac{\sup_{w\in D}\log[f^n(w),g(w)]_{\bP^1}}{d^n+\deg g}=0.\label{eq:apriorinonarchi}
\end{gather}
\end{mainth}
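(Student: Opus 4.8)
The plan is to reduce the statement to the behaviour of $f^n$ and $g$ at a single, well-chosen point of $\sP^1\setminus\bP^1$, so that no analytic estimate is actually needed. First I would make the trivial reduction: since $[\cS,\cS']_{\bP^1}\le 1$ always, the numerator in \eqref{eq:apriorinonarchi} is $\le 0$ for every $n$, while $d^n+\deg g\to\infty$. Hence it is enough to produce a constant $c\in\bR$, depending only on $f$, $g$, $D$ and not on $n$, with
\begin{equation*}
\sup_{w\in D}\log[f^n(w),g(w)]_{\bP^1}\ \ge\ c\qquad\text{for all }n .
\end{equation*}
In words, one must only rule out that $f^n$ becomes, on all of $D$ simultaneously, arbitrarily close to $g$ in the chordal metric (not merely super-exponentially close).

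Next I would pass to the Berkovich line. I would use that $[\cdot,\cdot]_{\bP^1}$ extends to a continuous function $\sP^1\times\sP^1\to[0,1]$ (the generalized Hsia/chordal kernel), that it satisfies $[\cS,\cS']_{\bP^1}\ge\operatorname{diam}(\cS')$ for all $\cS,\cS'\in\sP^1$, where $\operatorname{diam}(\cS')$ (the spherical diameter of $\cS'$) is $>0$ precisely when $\cS'\in\sP^1\setminus\bP^1$, and that $f^n$ and $g$ act continuously on $\sP^1$ and — having degree $>0$ — send $\sP^1\setminus\bP^1$ into itself. Since $D$ is non-empty and open in $\bP^1$, it contains a ``disk'' $D_0$ (a set $\{z\in K:|z-a|<r\}$, or the complement of a closed disk); I would take $\cS_\ast$ to be the generic (type II) point of $D_0$, so that $\cS_\ast\in\sP^1\setminus\bP^1$ lies in the closure of $D_0\subseteq D$ in $\sP^1$, i.e. classical points of $D$ accumulate at $\cS_\ast$. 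Put $\delta_0:=\operatorname{diam}(g(\cS_\ast))$; since $g(\cS_\ast)\in\sP^1\setminus\bP^1$ we have $\delta_0>0$.

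The conclusion is then immediate. By continuity of $w\mapsto[f^n(w),g(w)]_{\bP^1}$ on $\sP^1$ and the fact that $\cS_\ast$ is a limit of classical points of $D$,
\begin{equation*}
\sup_{w\in D}[f^n(w),g(w)]_{\bP^1}\ \ge\ \bigl[f^n(\cS_\ast),g(\cS_\ast)\bigr]_{\bP^1}\ \ge\ \operatorname{diam}(g(\cS_\ast))\ =\ \delta_0\ >\ 0
\end{equation*}
for every $n$; the middle inequality is the lower bound above applied with $\cS'=g(\cS_\ast)$, and it is insensitive to where the orbit point $f^n(\cS_\ast)$ sits (it even holds if $f^n(\cS_\ast)=g(\cS_\ast)$). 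Hence $\sup_{w\in D}\log[f^n(w),g(w)]_{\bP^1}\ge\log\delta_0$ for all $n$, and dividing by $d^n+\deg g$ yields \eqref{eq:apriorinonarchi}.

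The only genuine content, hence the part needing care, is the choice of test point and the observation that passing to $\sP^1$ trivializes the estimate: in the ultrametric world $g$ already spreads $D$ over a set of definite spherical size $\delta_0$, which no iterate $f^n$ — however large its degree — can collapse, since a chordal ball of radius $<\delta_0$ cannot contain such a set. I expect the care to be confined to invoking the standard properties of $\sP^1$ used above: continuity of the extended chordal kernel and of the iterates on $\sP^1$, the inequality $[\cS,\cS']_{\bP^1}\ge\operatorname{diam}(\cS')$, and that a degree-$>0$ map carries $\sP^1\setminus\bP^1$ into itself with images of positive diameter. (Note this argument does not use that $f$ has no potentially good reductions; that hypothesis is presumably present because the bound is applied under it in the equidistribution corollary. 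If one preferred, the same conclusion could instead be extracted potential-theoretically, writing $\log[f^n(\cdot),g(\cdot)]_{\bP^1}$ via the cross-product resultant polynomial of degree $d^n+\deg g$ and controlling it through the equidistribution \eqref{eq:pullbacknonarch} and the uniform convergence of the normalized homogeneous iterates to the dynamical Green function — but the elementary route above already suffices.)
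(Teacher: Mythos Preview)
Your argument has a genuine gap at the passage from classical points to the Berkovich point $\cS_\ast$. The generalized Hsia (chordal) kernel on $\sP^1\times\sP^1$ is only \emph{upper} semicontinuous, not continuous; the failure of continuity is precisely on the diagonal, where $[\cS,\cS]_{\can}=\operatorname{diam}(\cS)$ need not equal any limit of $[\cS_k,\cS_k']$ along classical sequences $\cS_k,\cS_k'\to\cS$. Consequently the composite $\cS\mapsto[f^n(\cS),g(\cS)]_{\can}$ is \emph{not} the continuous extension of $w\mapsto[f^n(w),g(w)]_{\bP^1}$ from $\bP^1$ to $\sP^1$ --- the paper notes this explicitly: the continuous extension, written $[f^n,g]_{\can}(\cdot)$, is built from the wedge $F^n\wedge G$ and in general differs from evaluating the Hsia kernel at $(f^n(\cS),g(\cS))$. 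Upper semicontinuity gives only
\[
\limsup_{w\to\cS_\ast,\,w\in\bP^1}[f^n(w),g(w)]_{\bP^1}\ \le\ [f^n(\cS_\ast),g(\cS_\ast)]_{\can},
\]
which is the wrong direction for your purpose. Concretely, whenever $f^n(\cS_\ast)=g(\cS_\ast)$ --- and this happens for every $n$ if, say, $\cS_\ast=\cS_{\can}$ and both $f$ and $g$ have good reduction --- classical points $w_k\to\cS_\ast$ give two classical sequences $f^n(w_k),g(w_k)$ converging to the \emph{same} type~II point, and nothing forces $[f^n(w_k),g(w_k)]_{\bP^1}$ to stay above $\operatorname{diam}(g(\cS_\ast))$; it can be arbitrarily small. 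So your displayed chain of inequalities breaks at the first ``$\ge$''.

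Your closing remark that the no-potentially-good-reductions hypothesis goes unused should have been a warning sign: in the paper's proof that hypothesis is essential. Assuming the limit in \eqref{eq:apriorinonarchi} is negative along a subsequence, one shows (via Lemma~\ref{th:Fatou}) that $D$ lies in a Fatou component $U$ whose forward orbit lands in a Berkovich \emph{domaine singulier}, extracts through a Hartogs-type compactness a limit $\phi\le 0$ with $\Delta\phi+\mu_f\ge 0$ and $\phi\equiv 0$ on $\sJ(f)$, and then invokes the key Lemma~\ref{th:boundarynonarch} --- which is exactly where ``no potentially good reductions'' enters --- to obtain $\mu_f(\partial U)=0$. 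A comparison argument then forces $\phi\equiv 0$, contradicting $U\subset\{\phi<0\}$. There is no shortcut via a single-point diameter bound.
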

The argument in the proof is similar to
those in Buff--Gauthier \cite{BG14} and Gauthier \cite{Gauthier13},
using a comparison principle
(Bedford--Taylor \cite{BT87}; see also Bedford--Smillie \cite[Page 77]{BS91})
from pluripotential theory.
We note that if $g\equiv a$ on $\bP^1$ for some $a\in\bP^1$,
then \eqref{eq:apriorinonarchi} still holds unless $a\in E(f)$. 
The choice of $d^n+\deg g=d^n+o(d^n)$
as $n\to\infty$ rather than the simpler $d^n$ 
as the denominator in \eqref{eq:apriorinonarchi}
reflects the fact that whenever $f^n\not\equiv g$, 
the degree of the effective divisor $[f^n=g]$
on $\bP^1$ defined by all roots in $\bP^1$ of the algebraic equation $f^n=g$
taking into account their multiplicities equals $d^n+\deg g$.

\subsection{Equidistribution towards $\mu_f$ for moving targets}

For every $g\in K(z)$ of degree $>0$ and every $n\gg 1$,
we regard the effective divisor $[f^n=g]$ on $\bP^1$ mentioned at the end of
Subsection \ref{sec:bound} as the positive Radon measure 
\begin{gather*}
 \sum_{a\in\bP^1}(\ord_a[f^n=g])\delta_a\quad\text{on }\sP^1, 
\end{gather*}
which we also denote by the same notation $[f^n=g]$ for simplicity.
Then $[f^n=g]/(d^n+\deg g)$ is a {\itshape probability} Radon measure on $\sP^1$.

The following equidistribution {\itshape for moving targets} 
under the no potentially good reductions condition
is an application of Theorem \ref{th:apriorinonarch}, and
partly answers the question posed by Favre--Rivera-Letelier
\cite[apr\`es Th\'eor\`eme B]{FR09}.

\begin{mainth}\label{th:equidistpositive}
Let $K$ be an algebraically closed field of any characteristic
that is complete with respect to a non-trivial and non-archimedean
absolute value. Then for every $f\in K(z)$
of degree $d>1$ having no potentially good reductions and 
every $g\in K(z)$ of degree $>0$, we have
\begin{gather}
\lim_{n\to\infty}\frac{[f^n=g]}{d^n+\deg g}=\mu_f\quad\text{weakly on }\sP^1.\label{eq:equidistmoving}
\end{gather}
\end{mainth}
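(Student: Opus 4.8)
The plan is to deduce the equidistribution \eqref{eq:equidistmoving} from the $a$ priori bound \eqref{eq:apriorinonarchi} in Theorem \ref{th:apriorinonarch} together with the known equidistribution \eqref{eq:pullbacknonarch} for iterated pullbacks of points. The starting observation is that for a fixed base point $\cS_0\in\sP^1\setminus E(f)$ we already have $(f^n)^*\delta_{\cS_0}/d^n\to\mu_f$ weakly on $\sP^1$; since $\deg g=o(d^n)$, it suffices to compare the probability measures $[f^n=g]/(d^n+\deg g)$ with $(f^n)^*\delta_{\cS_0}/(d^n+\deg g)$ up to a total-mass correction that tends to $1$. Concretely, I would take $\cS_0$ to be a point of $\bP^1$ not in $E(f)$ and not a value of $g$, write both divisors as $\frac{1}{2\pi}\Delta$ of suitable potentials on $\sP^1$ (using the Laplacian/$\operatorname{dd}^c$ formalism on the Berkovich line, cf.\ \cite{BR10}, \cite{FR09}), and show that the difference of the normalized potentials tends to $0$ in a sense strong enough to pass to the Laplacian.

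The key computation is the potential-theoretic identity for the divisor $[f^n=g]$: if $F_n,G\in\cO_K[z_0,z_1]^2$ are homogeneous lifts of $f^n$ and $g$ of the appropriate bidegrees, then the function $w\mapsto \log[f^n(w),g(w)]_{\bP^1}$ is, off the support of $[f^n=g]$, a potential for $[f^n=g]$ relative to the reference measure $\mu_{\mathrm{can}}$ (the Dirac-free, or canonical, background), in the sense that $\frac{1}{2\pi}\Delta\bigl(\log[f^n(\cdot),g(\cdot)]_{\bP^1}\bigr)=[f^n=g]-(\deg f^n)\,g^*\mu_{\mathrm{can}}-(\deg g)\,(f^n)^*\mu_{\mathrm{can}}$ or an analogous identity; dividing by $d^n+\deg g$ and using that $g^*\mu_{\mathrm{can}}/(d^n+\deg g)\to 0$ and that $(f^n)^*\mu_{\mathrm{can}}/d^n\to\mu_f$ (again by \eqref{eq:pullbacknonarch} applied to a generic point, since $\mu_{\mathrm{can}}$ gives no mass to $E(f)$), the whole problem reduces to showing that
\begin{gather*}
\frac{1}{d^n+\deg g}\log[f^n(\cdot),g(\cdot)]_{\bP^1}\to 0
\end{gather*}
in a topology compatible with taking $\Delta$. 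The upper bound on this quantity, locally uniformly on any non-empty open $D\subset\bP^1$, is exactly \eqref{eq:apriorinonarchi}; the matching lower bound (i.e.\ that the $\limsup$ of the negative part also vanishes after integration against any fixed continuous test function, or in $L^1_{\mathrm{loc}}$) should follow by integrating the identity over $\sP^1$: since both sides are signed measures of controlled total variation and $[f^n=g]/(d^n+\deg g)$ is a probability measure, a normalized potential that is bounded above and whose Laplacian has bounded mass cannot concentrate negative mass, so it converges to a constant, and the constant is pinned down by evaluating at one convenient point (e.g.\ the Gauss point, where one gets an exact arithmetic identity for $[f^n(\cdot),g(\cdot)]_{\bP^1}$ in terms of resultants with valuations $o(d^n)$).

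The main obstacle I anticipate is the lower-bound/compactness half: upgrading the one-sided locally uniform bound \eqref{eq:apriorinonarchi} to genuine convergence of the normalized potentials, which requires either a Harnack-type argument on $\sP^1$ or an appeal to the fact that a locally uniformly bounded-above family of $\mu_{\mathrm{can}}$-subharmonic (i.e., potential-theoretic) functions with Laplacians of uniformly bounded mass is precompact, and that any limit is a constant because $\mu_f$ has no atoms on $\sP^1\setminus\bP^1$ (the no potentially good reductions hypothesis) — so the limiting Laplacian, being $\mu_f-\mu_f=0$, forces the limit function to be constant. Nailing down this precompactness and the identification of the constant, entirely within the Berkovich-line potential theory of \cite{BR10} and \cite{FR09} and without any archimedean input, is where the real work lies; once that is in place, applying $\frac{1}{2\pi}\Delta$ to the vanishing limit gives \eqref{eq:equidistmoving} immediately.
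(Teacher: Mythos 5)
Your overall architecture is the same as the paper's: reduce \eqref{eq:equidistmoving}, via the potential identity for $[f^n=g]$ and continuity of $\Delta$, to the convergence $\log[f^{n},g]_{\can}(\cdot)/(d^{n}+\deg g)\to 0$, extract a subsequential limit $\phi\le 0$ by precompactness of bounded-above (quasi-)subharmonic families, and invoke Theorem \ref{th:apriorinonarch} to force $\phi\equiv 0$. However, the decisive step — identifying the limit — is where your sketch has a genuine gap, in two places. First, your assertion that ``the limiting Laplacian, being $\mu_f-\mu_f=0$, forces the limit function to be constant'' is circular: the limiting Laplacian is $\nu-\mu_f$ where $\nu$ is a weak limit of $[f^{n_j}=g]/(d^{n_j}+\deg g)$, and $\nu=\mu_f$ is exactly the statement to be proven; the no-atoms property of $\mu_f$ (equivalent to no potentially good reductions) plays no role at this point — it is consumed inside the proof of Theorem \ref{th:apriorinonarch}, not here. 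Second, the proposal to ``pin down the constant'' by evaluating at the Gauss point via an ``exact arithmetic identity \dots with valuations $o(d^n)$'' is unsubstantiated: there is no a priori reason that $\log[f^{n},g]_{\can}(\cS_{\can})=o(d^{n})$; that estimate at a single Type II point is essentially an instance of the conclusion, and Theorem \ref{th:apriorinonarch} only controls suprema over open subsets of $\bP^1$, which does not directly reach $\cS_{\can}\in\sP^1\setminus\bP^1$.

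The missing ingredient is the Hartogs-type lemma (\cite[Proposition 2.18]{FR09}, \cite[Proposition 8.57]{BR10}), which is what converts the one-point-per-open-set information of Theorem \ref{th:apriorinonarch} into control of the limit function: writing $\phi=\lim_{j}\log[f^{n_j},g]_{\can}(\cdot)/(d^{n_j}+\deg g)$ on $\sP^1\setminus\bP^1$ with $\Delta\phi+\mu_f$ a probability measure, if $\{\phi<0\}$ were non-empty (hence open), then for any domain $D'\Subset\{\phi<0\}$ Hartogs gives
\begin{gather*}
\limsup_{j\to\infty}\sup_{\cS\in D'}\frac{\log[f^{n_j},g]_{\can}(\cS)}{d^{n_j}+\deg g}\le\sup_{D'}\phi<0,
\end{gather*}
and since $\bP^1$ is dense in $\sP^1$, the set $D'\cap\bP^1$ is a non-empty open subset of $\bP^1$, contradicting \eqref{eq:apriorinonarchi}. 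This is the step your sketch must supply in place of the Gauss-point evaluation. Two further minor corrections: the factor $\tfrac{1}{2\pi}$ is an archimedean artifact with no meaning here, and your proposed identity $\Delta\log[f^n,g]_{\can}=[f^n=g]-(\deg f^n)\,g^*\delta_{\cS_{\can}}-(\deg g)\,(f^n)^*\delta_{\cS_{\can}}$ has the wrong coefficients (the total masses do not balance); the correct one is $\Delta\log[f^n,g]_{\can}=[f^n=g]-(f^n)^*\delta_{\cS_{\can}}-g^*\delta_{\cS_{\can}}$, as in \eqref{eq:rootsnormalized}. Also, the paper works with the uniform convergence $T_{F^n}/d^n\to g_F$ of potentials rather than the weak convergence $(f^n)^*\delta_{\cS_0}/d^n\to\mu_f$ you invoke; the former is what makes the passage to the Laplacian and the application of Hartogs clean.
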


In \cite[Th\'eor\`eme B]{FR09},
they established \eqref{eq:equidistmoving} 
in the $\operatorname{char}K=0$ case (even without the no potentially good
reductions assumption), and asked about the situation
in the $\operatorname{char}K>0$ case.
In Theorem \ref{th:equidistpositive}, in the $\operatorname{char}K>0$
case, the no potentially good reductions assumption can be relaxed
but cannot be omitted (e.g., $f(z)=z+z^p$ and $g(z)=z$
where $p=\operatorname{char}K>0$, as pointed out in \cite[apr\`es Th\'eor\`eme B]{FR09}).

\subsection{Organization of the article}
In Section \ref{sec:background}, we recall background on the
topology, potential theory, and dynamics on the Berkovich projective line.
In Section \ref{sec:key}, we show a lemma, which plays a key role
in the proof of Theorem \ref{th:apriorinonarch}.
In Sections \ref{sec:apriorinonarch} and \ref{sec:equidistnonarch}, 
we show Theorems \ref{th:apriorinonarch} and \ref{th:equidistpositive},
respectively.

\section{Background}
\label{sec:background}

Let $K$ be an algebraically closed field of any characteristic
that is complete with respect to a non-trivial and non-archimedean
absolute value $|\cdot|$. 

\subsection{The Berkovich projective line}

The Berkovich affine line $\sA^1=\sA^1(K)$ is the set of all
multiplicative seminorms on $K[z]$ which restricts to $|\cdot|$
on $K(\subset K[z]$ naturally). 
We write an element of $\sA^1$
like $\cS$, and denote it by $[\cdot]_{\cS}$ as a multiplicative
seminorm on $K[z]$. Under this convention, $\sA^1$ is
equipped with the weakest topology (the Gelfand topology)
such that for any $\phi\in K[z]$, 
the function $\sA^1\ni\cS\mapsto[\phi]_{\cS}\in\bR_{\ge 0}$ is continuous,
and then $\sA^1$ is a locally compact and Hausdorff topological
space. For more details, see \cite{Berkovichbook} (for the full generality
of Berkovich analytic spaces),
\cite{BR10}, \cite{FR09} (for the details on $\sP^1$).

Let us call a subset $B$ in $K$ a {\em $K$-closed disk} if
for some $a\in K$ and some $r\ge 0$, $B=\{z\in K:|z-a|\le r\}$. 
For any $K$-closed disks $B$ and $B'$, {\em if $B\cap B'\neq\emptyset$, then 
either $B\subset B'$ or $B'\subset B$}
(by the strong triangle inequality). 
The Berkovich representation \cite{Berkovichbook} asserts that
{\em any element $\cS\in\sA^1$ is 
induced by a non-increasing and nesting sequence $(B_n)$ of
$K$-closed disks $B_n$ in that}
\begin{gather*}
 [\phi]_{\cS}=\inf_{n\in\bN}\sup_{z\in B_n}|\phi(z)|
\quad\text{for any }\phi\in K[z]. 
\end{gather*}
In particular,
each point in $K=\bP^1\setminus\{\infty\}$ and, 
more generally, each $K$-closed disk are regarded as elements of $\sA^1$.

We would need some details on the topology of $\sP^1$ later, so let us introduce
$\sP^1$ as an ``$\bR$-''tree in the sense of Jonsson \cite[Definition 2.2]{Jonsson15}
as follows.
Any $[\cdot]_{\cS}\in\sA^1$ extends to the function
$K(z)\to\bR_{\ge 0}\cup\{+\infty\}$ 
such that for any $\phi=\phi_1/\phi_2\in K(z)$ where
$\phi_1,\phi_2\in K[z]$ are coprime,
$[\phi]_{\cS}=[\phi_1]_{\cS}/[\phi_2]_{\cS}\in\bR_{\ge 0}\cup\{+\infty\}$, 
and we also regard $\infty\in\bP^1$ as the function 
$[\cdot]_{\infty}:K(z)\to\bR_{\ge 0}\cup\{+\infty\}$ 
such that for every $\phi\in K(z)$,
$[\phi]_{\infty}=|\phi(\infty)|\in\bR_{\ge 0}\cup\{+\infty\}$.

As a set, let us define $\sP^1:=\sA^1\cup\{\infty\}$,
which is also equipped with a (partial) order 
$\le_\infty$ so that for any $\cS,\cS'\in\sP^1$, $\cS\le_\infty\cS'$
if and only if $[\cdot]_{\cS}\le[\cdot]_{\cS'}$ on $K[z](\subset K(z))$.
For any $\cS,\cS'\in\sP^1$, if $\cS\le_{\infty}\cS'$,
then set $[\cS,\cS']=[\cS',\cS]:=\{\cS''\in\sP^1:\cS\le\cS''\le\cS'\}$, and
in general, 
there is the unique, say, $\cS\wedge_\infty\cS'\in\sP^1$ such that
$[\cS,\infty]\cap[\cS',\infty]=[\cS\wedge_\infty\cS',\infty]$, and
set $[\cS,\cS']:=[\cS,\cS\wedge_\infty\cS']\cup[\cS\wedge_\infty\cS',\cS']$.

For any $\cS\in\sP^1$, let us introduce the equivalence class 
\begin{gather*}
 T_{\cS}\sP^1:=(\sP^1\setminus\{\cS\})/\sim,
\end{gather*}
where for every $\cS',\cS''\in\sP^1\setminus\{\cS\}$, we say
$\cS'\sim\cS''$ if $[\cS,\cS']\cap[\cS,\cS'']\neq\{\cS\}$, or equivalently,
$[\cS,\cS']\cap[\cS,\cS'']=[\cS,\cS''']$ for some $\cS'''\in\sP^1\setminus\{\cS\}$. An element of $T_{\cS}\sP^1$ represented by an element 
$\cS'\in\sP^1\setminus\{\cS\}$ is denoted by $\overrightarrow{\cS\cS'}$. 
We call an element of $T_{\cS}\sP^1$ a {\itshape direction} of $\sP^1$
at $\cS$ and write it like $\Vv$, and also denote it by $U(\Vv)$ as a
subset in $\sP^1\setminus\{\cS\}$; 
we note that for every $a\in\bP^1$, $\#T_a\sP^1=1$. 

Let us equip $\sP^1$ with the {\em observer}
topology having the quasi-open basis
$\bigl\{U(\Vv):\cS\in\sP^1,\Vv\in T_{\cS}\sP^1\bigr\}$. 
This topological space $\sP^1$ coincides with the one-point compactification
of $\sA^1$, 
both $\bP^1$ and $\sP^1\setminus\bP^1$ are dense in $\sP^1$,
the set $U(\Vv)$ is a component of $\sP^1\setminus\{\cS\}$
for each $\cS\in\sP^1$ and each $\Vv\in T_{\cS}\sP^1$,
and for any $\cS,\cS'\in\sP^1$,
the set $[\cS,\cS']$ is an arc in $\sP^1$.
 
We prepare the following.

\begin{lemma}\label{th:boundary}
 For any domains $U,V$ in $\sP^1$, if $U\cap V=\emptyset$ and
 $\partial U\cap\partial V\neq\emptyset$, then 
 $\partial U\cap\partial V$ is a singleton, say, $\{\cS_0\}$ in
 $\sP^1\setminus\bP^1$, and moreover, there are
 distinct $\Vu,\Vv\in T_{\cS_0}\sP^1$ such that
 $U\subset U(\Vu)$ and $V\subset U(\Vv)$.
\end{lemma}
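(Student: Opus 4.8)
The plan is to exploit the tree structure of $\sP^1$ together with the fact that $\sP^1$ is \emph{uniquely arcwise connected}: for any two distinct points $\cS_1,\cS_2$ there is a unique arc $[\cS_1,\cS_2]$ joining them. First I would pick $\cS_0\in\partial U\cap\partial V$. Since $U$ is a domain (open and connected) and $\cS_0\notin U$ (boundary points of an open set lie outside it), $\cS_0$ determines a direction at $\cS_0$, namely the one whose associated component $U(\Vu)$ of $\sP^1\setminus\{\cS_0\}$ contains $U$: indeed, $U\subset\sP^1\setminus\{\cS_0\}$ is connected, hence lies in a single component of $\sP^1\setminus\{\cS_0\}$, and as recalled in the excerpt each such component is exactly some $U(\Vu)$ with $\Vu\in T_{\cS_0}\sP^1$. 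Likewise $V\subset U(\Vv)$ for some $\Vv\in T_{\cS_0}\sP^1$. Since $U(\Vu)\cap U(\Vv)=\emptyset$ whenever $\Vu\neq\Vv$, and $U\cap V=\emptyset$ by hypothesis, the only thing that needs an argument is that $\Vu\neq\Vv$; but if $\Vu=\Vv$ then $U$ and $V$ would both be open subsets of the connected set $U(\Vu)$ with $\cS_0$ in the closure of each — I would rule this out by showing that a point in the common boundary forces the two directions apart, as follows.

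The key point is the following: if $\cS_0\in\partial U$ and $U\subset U(\Vu)$, then $\cS_0$ is an \emph{endpoint} of $U$ in the sense that every arc from $\cS_0$ into $U$ enters $U$ through the direction $\Vu$, and moreover $U\cap U(\Vu')=\emptyset$ for every direction $\Vu'\neq\Vu$ at $\cS_0$ (this is immediate since $U\subset U(\Vu)$ and the $U(\Vu')$ are pairwise disjoint). Now suppose for contradiction that $\Vu=\Vv$, i.e.\ $U\cup V\subset U(\Vu)=:W$, a component of $\sP^1\setminus\{\cS_0\}$. Take any $\cS\in U$ and any $\cS'\in V$. The arc $[\cS,\cS']$ lies in $W\cup\{\cS_0\}$; in fact, since $W$ is connected and open and contains both endpoints, $[\cS,\cS']\subset W$ provided $[\cS,\cS']$ does not pass through $\cS_0$ — and it cannot, because $\cS_0\notin W$ while the arc joining two points of $W$ inside the tree stays in the smallest connected subset containing them, which is contained in $W$. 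So $[\cS,\cS']\subset W\subset$ (some neighbourhood avoiding neither $U$ nor $V$). The contradiction I want to extract is with the separation $U\cap V=\emptyset$ combined with $\cS_0\in\partial U\cap\partial V$: I would produce a connected subset meeting both $U$ and $V$ and lying in $W$, then use that a point cannot be simultaneously an interior boundary point of $U$ and of $V$ when both are ``nested'' in the same direction without forcing them to overlap — concretely, one shows $\cS_0$ has arbitrarily small neighbourhoods of the form $\{\cS_0\}\cup(\text{union of some }U(\Vw))$, and $\cS_0\in\partial U$ means $U$ meets $U(\Vu)$ arbitrarily close to $\cS_0$; if $U,V\subset U(\Vu)$ the same holds for $V$, and then connectedness of a small ``sub-tree'' near $\cS_0$ inside $U(\Vu)$ together with $U,V$ both accumulating at $\cS_0$ violates $U\cap V=\emptyset$ only if one uses that $U,V$ are \emph{open}. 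The cleanest route: apply the already-noted fact that $U(\Vu)$ is itself an open subtree; restrict everything to $U(\Vu)$ and reduce to the statement that in a tree two disjoint open connected sets cannot share a boundary point that is an \emph{interior} point of the tree relative to the arc structure — but $\cS_0$ being a common boundary point with $U,V\subset U(\Vu)$ makes $\cS_0$ look like a single direction seen from both, which is the desired contradiction.

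Finally, for the assertion $\cS_0\in\sP^1\setminus\bP^1$: I would use the recalled fact that $\#T_a\sP^1=1$ for every $a\in\bP^1$. If $\cS_0\in\bP^1$, then $\sP^1\setminus\{\cS_0\}$ has a single component, so there is only one direction $\Vu=\Vv$ at $\cS_0$, contradicting $\Vu\neq\Vv$ already established. Hence $\cS_0$ must be a point of $\sP^1\setminus\bP^1$ (a type II, III, or IV point), which has at least two — in fact, for type II points, infinitely many — directions. For uniqueness of $\cS_0$: if $\cS_1\in\partial U\cap\partial V$ with $\cS_1\neq\cS_0$, then by the same argument $U$ lies in a single direction-component at $\cS_1$ and $V$ in another; but $U$ is connected and $\cS_0\in\partial U$ forces the arc $[\cS_0,\cS_1]\setminus\{\cS_0\}$ to be ``pointed at by $U$'' from $\cS_0$, while $\cS_1\in\partial U$ forces the opposite orientation from $\cS_1$, and tracing the arc $[\cS_0,\cS_1]$ one sees $U$ would have to contain an interior point of this arc, hence a point of $\partial V$ would be interior to $U$, contradicting $U\cap V=\emptyset$ once one checks $\partial V\subset\overline V$ and $U\cap\overline V$ can meet only if $U\cap V\neq\emptyset$ by openness of $U$. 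I expect the main obstacle to be making the ``nested in the same direction'' contradiction fully rigorous purely from the observer-topology axioms — i.e., carefully arguing that a shared boundary point of two disjoint domains cannot be approached from within a single common direction-component — rather than any deep fact; the tree axioms of \cite[Definition 2.2]{Jonsson15} should supply exactly the unique-arc and basis-of-subtrees properties needed.
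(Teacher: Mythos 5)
Your overall strategy is the paper's: fix $\cS_0\in\partial U\cap\partial V$, observe that $U$ and $V$, being connected subsets of $\sP^1\setminus\{\cS_0\}$, each lie in a single direction-component $U(\Vu)$, $U(\Vv)$, and reduce everything to showing $\Vu\neq\Vv$ (after which the singleton claim follows from $\partial U\subset\overline{U}\subset U(\Vu)\cup\{\cS_0\}$, $\partial V\subset U(\Vv)\cup\{\cS_0\}$ and $U(\Vu)\cap U(\Vv)=\emptyset$, and $\cS_0\notin\bP^1$ follows from $\# T_a\sP^1=1$ for $a\in\bP^1$). But the pivotal step $\Vu\neq\Vv$ is exactly where your argument does not close. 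Producing a connected subset of $U(\Vu)$ meeting both $U$ and $V$ proves nothing, since two disjoint open sets can both meet a connected set; and the statement you propose to reduce to --- that in a tree two disjoint open connected sets cannot share a boundary point --- is false as stated: that is precisely the configuration of the lemma (they can share a boundary point, just not from the same direction). The missing ingredient, which is what the paper uses, is the fact that if $U$ is a domain with $\cS_0\in\partial U$ and $\cS\in U$, then $[\cS_0,\cS]\setminus\{\cS_0\}\subset U$ (so in particular $\Vu=\overrightarrow{\cS_0\cS}$). One proves it by taking $\cS''\in[\cS_0,\cS]\setminus\{\cS_0,\cS\}$, choosing $\cS'''\in U$ in the component of $\sP^1\setminus\{\cS''\}$ containing $\cS_0$ (possible since $\cS_0\in\overline{U}$), and noting that the arc $[\cS''',\cS]$, which lies in $U$ by connectedness, must pass through $\cS''$. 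Granting this, if $\Vu=\Vv$, i.e.\ $\overrightarrow{\cS_0\cS}\sim\overrightarrow{\cS_0\cS'}$ for some $\cS\in U$ and $\cS'\in V$, then by the very definition of $\sim$ we have $[\cS_0,\cS]\cap[\cS_0,\cS']\neq\{\cS_0\}$, and any point of this intersection other than $\cS_0$ lies in $U\cap V$ --- a one-line contradiction. You do invoke the half-open-arc fact implicitly in your uniqueness paragraph (``$U$ would have to contain an interior point of this arc''), but you never isolate it where it is needed, and without it the central step remains, as you yourself concede, unproved.

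A secondary remark: once $\Vu\neq\Vv$ is established, the uniqueness of $\cS_0$ is immediate from the containments $(\partial U)\setminus\{\cS_0\}\subset U(\Vu)$ and $(\partial V)\setminus\{\cS_0\}\subset U(\Vv)$ together with $U(\Vu)\cap U(\Vv)=\emptyset$; your separate arc-tracing argument for uniqueness is salvageable (both $U$ and $V$ would contain the open arc $(\cS_0,\cS_1)$, directly contradicting disjointness) but is more roundabout than necessary.
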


\begin{proof}
 Let $U,V$ be domains in $\sP^1$ satisfying $U\cap V=\emptyset$ and
 $\partial U\cap\partial V\neq\emptyset$, and
 fix $\cS_0\in\partial U\cap\partial V$. Then
 $U\subset U(\Vu)$ and $V\subset U(\Vv)$
 for some $\Vu,\Vv\in T_{\cS_0}\sP^1$ since
 both $U$ and $V$ are domains in $\sP^1\setminus\{\cS_0\}$.
 Then we must have $\Vu\neq\Vv$, or equivalently $U(\Vu)\cap U(\Vv)=\emptyset$ 
 since for some (indeed any) $\cS\in U$ and some (indeed any) $\cS'\in V$,
 we have $\Vu=\overrightarrow{\cS_0\cS}, 
 \Vv=\overrightarrow{\cS_0\cS'},[\cS_0,\cS]\setminus\{\cS_0\}\subset U$,
 $[\cS_0,\cS']\setminus\{\cS_0\}\subset V$ (and the assumption
 $U\cap V=\emptyset$). Now the proof is complete since
 $(\partial U)\setminus\{\cS_0\}\subset U(\Vu)$ and
 $(\partial V)\setminus\{\cS_0\}\subset U(\Vv)$.
\end{proof}

\subsection{Potential theory on $\sP^1$}
For potential theory on $\sP^1=\sP^1(K)$ including 
the fully general study of harmonic analysis on $\sP^1$, i.e., subharmonic functions on open subsets in $\sP^1$, see 
Baker--Rumely \cite{BR10} and Thuillier \cite{ThuillierThesis},
and for the study of the class of 
``$\delta_{\cS_{\can}}$-subharmonic functions'' 
$g:\sP^1\to\bR_{\le 0}\cup\{-\infty\}$ 
such that
\begin{gather*}
 \Delta g+\delta_{\cS_{\can}} 
\end{gather*}
are probability Radon measures on $\sP^1$,
see Favre--Rivera-Letelier \cite{FR09}.
Here 
$\Delta=\Delta_{\sP^1}$ is the Laplacian on $\sP^1$
(in \cite{BR10} the opposite sign convention on $\Delta$ is adopted), and 
the {\itshape Gauss $($or canonical$)$ point} 
$\cS_{\can}\in\sP^1$ is {\em represented} 
by the (constant sequence of the) $K$-closed disk
$\cO_K=\{z\in K:|z|\le 1\}$, i.e., for every $\phi\in K[z]$,
$[\phi]_{\cS_{\can}}=\sup_{z\in\cO_K}|\phi(z)|$.

Let $\|(z_0,z_1)\|=\max\{|z_0|,|z_1|\}$ be the {\itshape maximal} norm 
on $K^2$, and $\pi:K^2\setminus\{(0,0)\}\to\bP^1$ be the canonical projection. 
Noting that $K^2\wedge K^2\cong K$, the (normalized) chordal metric 
$[z,w]$ on $\bP^1$ is defined as
\begin{gather}
 [z,w]_{\bP^1}:=\frac{|Z\wedge W|}{\|Z\|\cdot\|W\|}\le 1,\quad z,w\in\bP^1,\label{eq:chordalnonarch}
\end{gather}
where $Z\in\pi^{-1}(z)$ and $W\in\pi^{-1}(w)$;
the topology
on $(\bP^1,[z,w])$ coincides with the relative topology of $\bP^1$ as a subset
of $\sP^1$. 
The function 
\begin{gather*}
 \log\max\{1,|\cdot|\}(=-\log[\cdot,\infty]_{\bP^1})\quad\text{on }K
\end{gather*}
extends {\em continuously} to $\sP^1\setminus\{\infty\}$
and in turn to a function $\sP^1\to\bR_{\ge 0}\cup\{+\infty\}$ 
such that, writing also this extension
as $\log\max\{1,|\cdot|\}$ for simplicity, we have
\begin{gather}
 \Delta(\log\max\{1,|\cdot|\})=\delta_{\cS_{\can}}-\delta_\infty
\quad\text{on }\sP^1.\label{eq:Laplacianaffine} 
\end{gather}

\subsection{Dynamics of rational functions on $\sP^1$}
Let $h\in K(z)$ be of degree $>0$.
A (non-degenerate homogeneous) {\itshape lift} of $h$ 
is an ordered pair $H=(H_0,H_1)\in(K[z_0,z_1]_{\deg h})^2$, 
which is unique up to multiplication in $K^*$, such that
$\pi\circ H=h\circ\pi$ on $K^2\setminus\{(0,0)\}$
(and that $H^{-1}(0,0)=\{(0,0)\}$).
Then the function
\begin{gather*}
 T_H:=\log\|H\|-(\deg h)\cdot\log\|\cdot\| 
\end{gather*}
on $K^2\setminus\{0\}$ descends to
$\bP^1$ and in turn extends {\em continuously} to $\sP^1$, and in fact
\begin{gather*}
 \Delta T_H=h^*\delta_{\cS_{\can}}-(\deg h)\cdot\delta_{\cS_{\can}}\quad\text{on }\sP^1\label{eq:potential}
\end{gather*}
(see, e.g., \cite[Definition 2.8]{OkuCharacterization}).

Let $f\in K(z)$ of degree $d>1$, and fix a lift $F$ of $f$. 
Then for every $n\in\bN$, $F^n$ is a lift of $f^n$ and
$\deg(f^n)=d^n$, and there is the uniform limit 
\begin{gather}
 g_F:=\lim_{n\to\infty}\frac{T_{F^n}}{d^n}\quad\text{on }\sP^1,\label{eq:Green} 
\end{gather}
which is continuous on $\sP^1$ and in fact satisfies 
\begin{gather}
 \Delta g_F=\mu_f-\delta_{\cS_{\can}}\quad\text{on }\sP^1\label{eq:canonicaldefining}
\end{gather}
(see \cite[\S 10]{BR10}, \cite[\S 6.1]{FR09}).
We call $g_F$ the {\em dynamical Green function of $F$ on} $\sP^1$.

For every $g\in K(z)$ of degree $>0$ and every $n\in\bN$,
the function $z\mapsto[f^n(z),g(z)]_{\bP^1}$ on $\bP^1$ 
extends continuously to a function 
\begin{gather*}
 \cS\mapsto [f^n,g]_{\can}(\cS)
\end{gather*}
on $\sP^1$ so that $0\le [f^n,g]_{\can}(\cdot)\le 1$ on $\sP^1$ and that
\begin{gather*}
 \Delta\log[f^n,g]_{\can}(\cdot)=[f^n=g]-(f^n)^*\delta_{\cS_{\can}}-g^*\delta_{\cS_{\can}}\quad\text{on }\sP^1
\end{gather*} 
(\cite[Proposition 2.9 and Remark 2.10]{OkuCharacterization});
this extension does not necessarily coincide with the evaluation function
$\cS\mapsto[\cS',\cS'']_{\can}$ at $(\cS',\cS'')=(f^n(\cS),g(\cS))\in(\sP^1)^2$.
Fixing also a lift $G$ of $g$, for every $n\in\bN$, we have
\begin{gather}
 \Delta(\log[f^n,g]_{\can}(\cdot)+T_{F^n}+T_{G})
=[f^n=g]-(d^n+\deg g)\delta_{\cS_{\can}}
\label{eq:rootsnormalized}
\end{gather} 
on $\sP^1$.

\subsection{The Fatou-Julia strategy}
Let $f\in K(z)$ be of degree $d>1$.
The Berkovich Julia set $\sJ(f)$ of $f$ is {\itshape defined by}
$\supp\mu_f$, so is non-empty, and the Berkovich 
Fatou set $\sF(f)$ of $f$ is by $\sP^1\setminus \sJ(f)$.
We prepare the following.

\begin{lemma}\label{th:Fatou}
Let $D$ be an open subset in $\sP^1$ such that, for some sequence $(n_j)$ 
in $\bN$ tending to $\infty$ as $j\to\infty$ and some $g\in K(z)$, 
$\lim_{j\to\infty}f^{n_j}=g$ uniformly as such mappings $D\cap\bP^1\to(\bP^1,[z,w])$. 
Then $D\subset \sF(f)$. 
\end{lemma}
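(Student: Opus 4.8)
The plan is to show that the convergence hypothesis on a subsequence $f^{n_j}\to g$ on $D\cap\bP^1$ forces every point of $D$ into the Berkovich Fatou set $\sF(f)=\sP^1\setminus\supp\mu_f$, which — since $\sF(f)$ is open — amounts to proving that $\mu_f$ puts no mass on $D$. First I would recall the standard characterization of the Julia set in terms of equicontinuity: if $D\cap\bP^1$ is a region on which $\{f^n\}$ (or even a subsequence) forms a normal, i.e.\ locally uniformly convergent-along-subsequences, family into $(\bP^1,[z,w])$, then $D\cap\bP^1$ should be disjoint from the \emph{classical} Julia set. The hypothesis gives us exactly one convergent subsequence; the point is that a single uniform limit on $D\cap\bP^1$ already suffices, because on the classical Julia set iterates expand the chordal metric and cannot converge uniformly on any open piece without the limit being locally constant, which would then be contradicted by mixing of $\mu_f$ (or by the fact that $\mu_f$ has no atoms off $\bP^1$ and its support is perfect).

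The cleaner route, avoiding reliance on the classical Fatou--Julia dichotomy being transported verbatim to the Berkovich setting, is to argue directly via the potential-theoretic objects introduced above. Fix a lift $F$ of $f$ and a lift $G$ of $g$. For each $n_j$ consider the function $u_j:=\log[f^{n_j},g]_{\can}(\cdot)+T_{F^{n_j}}+T_G$ on $\sP^1$; by \eqref{eq:rootsnormalized} we have $\Delta u_j=[f^{n_j}=g]-(d^{n_j}+\deg g)\delta_{\cS_{\can}}$. Dividing by $d^{n_j}$ and using \eqref{eq:Green}, the normalized potentials $u_j/d^{n_j}$ converge, on account of $T_{F^{n_j}}/d^{n_j}\to g_F$ and $T_G/d^{n_j}\to 0$ and the boundedness $0\le[f^{n_j},g]_{\can}\le 1$ together with the hypothesis that $[f^{n_j}(w),g(w)]_{\bP^1}\to 1$ — wait, more carefully: on $D\cap\bP^1$ the uniform convergence $f^{n_j}\to g$ means $[f^{n_j}(w),g(w)]_{\bP^1}\to 0$, hence $\log[f^{n_j},g]_{\can}\to-\infty$ pointwise on $D\cap\bP^1$, but after division by $d^{n_j}$ this term still tends to $0$ since it is uniformly $\le 0$ and $\ge -C$ on compacts away from the zero set. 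The upshot is that $\lim_{j\to\infty}[f^{n_j}=g]/d^{n_j}=\mu_f$ weakly on $\sP^1$, which is just the restriction of \eqref{eq:equidistmoving} to the subsequence and follows already from \eqref{eq:canonicaldefining}. But the mass of $[f^{n_j}=g]$ inside $D$ is controlled: since $f^{n_j}$ and $g$ are \emph{chordally close} uniformly on $D\cap\bP^1$ for large $j$, and $g$ is a fixed rational map of finite degree, the equation $f^{n_j}=g$ can have at most $\deg g$ solutions in $D\cap\bP^1$ (for $j$ large, once $f^{n_j}$ is within a definite chordal distance of $g$ on all of $D\cap\bP^1$, any solution would be a solution of a nearby perturbation, and Rouché-type counting on the affinoid $D$ bounds the count by $\deg g$, which is $o(d^{n_j})$). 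Passing to the limit, $\mu_f(D)=\lim_j [f^{n_j}=g](D)/(d^{n_j}+\deg g)=0$, so $D\cap\supp\mu_f=\emptyset$, i.e.\ $D\subset\sF(f)$.

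The main obstacle — and the step I would spend the most care on — is the Rouché-type bound on the number of solutions of $f^{n_j}=g$ inside $D$ when $f^{n_j}$ is uniformly chordally close to $g$ there. One has to be careful that $D\cap\bP^1$ is an arbitrary nonempty open subset of $\bP^1$, not a nice disk, so to apply a nonarchimedean Rouché argument I would first shrink to a $K$-rational closed disk (or a standard affinoid) $B$ contained in $D$ whose chordal diameter is small; on such a $B$, for $j$ large, $f^{n_j}$ and $g$ map $B$ into a common small disk, and I can compose with a chart so that both are genuine analytic functions with $\|f^{n_j}-g\|_B$ small compared to $\|g'\|$-type data, whence the zero sets of $f^{n_j}-g$ and of (a fixed perturbation of) $g$ inside $B$ have equal cardinality, at most $\deg g$. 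Summing over a finite cover of a compact exhaustion of $D\cap\bP^1$ by such disks still gives an $O(\deg g)=o(d^{n_j})$ bound. An alternative and perhaps more robust finish, bypassing Rouché entirely, is: once $D\subset\bP^1$ carries a uniformly convergent subsequence of iterates into $(\bP^1,[z,w])$, invoke directly that no open subset of the classical Julia set admits such behaviour (the iterates $f^n|_{D\cap\bP^1}$ would then be a normal family, placing $D\cap\bP^1$ in the classical Fatou set, whose closure in $\sP^1$ is contained in $\sF(f)$ by \cite{FR09,BR10}), and conclude $D\subset\sF(f)$ since $\sF(f)$ is open and $\bP^1$ is dense in $\sP^1$. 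I would present the potential-theoretic argument as the primary proof and mention the normality argument as a remark, since the former is self-contained given the machinery already recalled in Section~\ref{sec:background}.
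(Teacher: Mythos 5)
Your primary argument is circular, and both of its key steps fail as stated. First, the assertion that $\lim_{j\to\infty}[f^{n_j}=g]/d^{n_j}=\mu_f$ weakly ``follows already from \eqref{eq:canonicaldefining}'' is in fact the content of Theorem \ref{th:equidistpositive}, which the paper deduces from Theorem \ref{th:apriorinonarch}, whose proof in turn uses Lemma \ref{th:Fatou}; worse, that equidistribution genuinely requires the no-potentially-good-reductions hypothesis, which Lemma \ref{th:Fatou} does not assume (the paper's own example $f(z)=z+z^p$, $g(z)=z$ in characteristic $p$ shows \eqref{eq:equidistmoving} can fail). The missing input in your potential computation is precisely the convergence $\log[f^{n_j},g]_{\can}(\cdot)/(d^{n_j}+\deg g)\to 0$ off the zero sets; your justification that this quantity is ``$\ge -C$ on compacts away from the zero set'' is unfounded, because the zero set $\{f^{n_j}=g\}$ moves with $j$ and no $j$-uniform lower bound is available --- producing such a bound is exactly what Theorem \ref{th:apriorinonarch} is for. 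Second, the Rouch\'e step is not salvageable: uniform smallness of $f^{n_j}-g$ on a disk $B$ gives \emph{no} control on its number of zeros in $B$ (a function of sup-norm $\varepsilon$ may vanish at $d^{n_j}$ points of $B$, and Rouch\'e requires $|f^{n_j}-g|$ to \emph{dominate} a comparison function on the boundary, not to be small). Indeed, if $D$ met $\sJ(f)$ one would expect $[f^{n_j}=g]$ to carry mass of order $d^{n_j}\mu_f(D)$ in $D$, so the bound you assert is essentially equivalent to the conclusion you are trying to prove.

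The fallback via classical normality also fails, because in the non-archimedean setting the closure in $\sP^1$ of the classical (equicontinuity) Fatou set need \emph{not} lie in $\sF(f)=\sP^1\setminus\supp\mu_f$: for $f(z)=z^2$ over $\mathbb{C}_p$ with $p$ odd, the iterates are equicontinuous on all of $\bP^1$ (by the strong triangle inequality, $|z^{2^n}-w^{2^n}|\le|z-w|$ on the unit sphere), so the classical Fatou set is all of $\bP^1$ and its closure is $\sP^1$, while $\sJ(f)=\{\cS_{\can}\}\neq\emptyset$. The paper's actual proof is short and uses only the previously known equidistribution \eqref{eq:pullbacknonarch} for pullbacks of points: if some $\cS\in D\cap\sJ(f)$, then every small neighborhood $D'\subset D$ of $\cS$ satisfies $\mu_f(D')>0$, so for each $a\in\bP^1\setminus E(f)$ the measures $(f^{n_j})^*\delta_a/d^{n_j}$ eventually charge $D'$; hence $\liminf_{j\to\infty}f^{n_j}(D')$ contains the dense set $\bP^1\setminus E(f)$, whereas the uniform convergence $f^{n_j}\to g$ forces $f^{n_j}(D'\cap\bP^1)$ to remain near $g(D'\cap\bP^1)$, which is not dense once $D'$ is small. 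You should adopt that argument, or else supply a correct substitute for the mass bound on $[f^{n_j}=g]$ in $D$.
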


\begin{proof}
Suppose to the contrary that there is $\cS\in D\cap \sJ(f)$. Then 
for any open neighborhood $D'$ of $\cS$ in $D$,
by the equidistribution \eqref{eq:pullbacknonarch}, 
$\liminf_{j\to\infty}f^{n_j}(D')$ contains the dense subset
$\bP^1\setminus E(f)$ in $\bP^1$,
and then $g(D'\cap\bP^1)$ must be dense in $\bP^1$. 
This is impossible since $\bP^1\setminus g(D'\cap\bP^1)$ 
contains a non-empty open subset in $\bP^1$ if $D'$ is small enough. 
\end{proof}

A Berkovich Fatou component $W$ of $f$ is a component of $\sF(f)$. 
Both $\sJ(f)$ and $\sF(f)$ are totally invariant under $f$,
$f$ maps a Berkovich Fatou component of $f$
properly to a Berkovich Fatou component of $f$,
and the preimage of a Berkovich Fatou component of $f$ under $f$ is the union of
(at most $d$) Berkovich Fatou components of $f$. 
A Berkovich Fatou component $W$ of $f$ is said to be
{\em cyclic} under $f$ if $f^p(W)=W$ for some $p\in\bN$, 
and then the minimal such $p$ is called the {\em exact period} of $W$ (under $f$). 
A cyclic Berkovich 
Fatou component $W$ of $f$ having the exact period, say, $p\in\bN$
is called a Berkovich {\itshape domaine singulier}
of $f$ if $f^p:W\to W$ is injective; then in particular
$f^{-1}(W)\neq W$ since $d>1$.

\section{A key lemma}
\label{sec:key}

Let $K$ be an algebraically closed field of any characteristic
that is complete with respect to a non-trivial and non-archimedean
absolute value $|\cdot|$. 

\begin{lemma}\label{th:boundarynonarch}
Let $f\in K(z)$ be of degree $d>1$, and 
have no potentially good reductions. Then
{\em (i)} for any Berkovich Fatou component $U$ of $f$,
we have $\partial U\neq \sJ(f)$
if $f^{-1}(U)\neq U$, and moreover, {\em (ii)} 
for every cyclic Berkovich Fatou component $W$ of $f$
satisfying $f^{-1}(W)\neq W$, we have $\mu_f(\partial U)=0$
for every component $U$ of $\bigcup_{n\in\bN\cup\{0\}}f^{-n}(W)$.
\end{lemma}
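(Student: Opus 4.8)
The plan is to prove the two assertions in turn, with (ii) bootstrapping from (i) via the dynamics of pullbacks.

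For part (i), suppose toward a contradiction that $U$ is a Berkovich Fatou component with $f^{-1}(U)\neq U$ yet $\partial U=\sJ(f)$. Since $f$ maps Fatou components onto Fatou components and $f^{-1}(U)$ is a union of finitely many Fatou components, the hypothesis $f^{-1}(U)\neq U$ forces there to be a Fatou component $U'\neq U$ with $f(U')\subset$ some component of $f^{-1}(f(U))$; more to the point, I would argue there exist \emph{two} distinct Fatou components $U_1,U_2$ (for instance two distinct components of $f^{-1}(U)$, which exist because $\deg f=d>1$ and, by no potentially good reductions, $f$ cannot have a totally invariant Fatou component whose boundary is a single point) each of whose boundary is contained in $\sJ(f)=\partial U$. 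Since $U_1\cap U_2=\emptyset$, Lemma~\ref{th:boundary} would apply as soon as $\partial U_1\cap\partial U_2\neq\emptyset$; but $\partial U_1,\partial U_2\subset\sJ(f)$, and I need these to actually intersect. The cleaner route is: $\sJ(f)=\supp\mu_f$ is connected when $f$ has no potentially good reductions? — that is not generally true, so instead I would use that $\partial U=\sJ(f)$ makes $\sJ(f)$ the boundary of a \emph{single} domain, hence (a topological fact about trees) $\sJ(f)$ has empty interior and $\sP^1\setminus\sJ(f)=U$, i.e. $\sF(f)=U$ is connected and totally invariant. A totally invariant Fatou component contradicts $f^{-1}(U)\neq U$. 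So the real content of (i) is: if $\partial U=\sJ(f)$ then $U=\sF(f)$ is totally invariant, which is immediate since any other Fatou component would be a second domain disjoint from $U$ whose closure meets $\partial U=\sJ(f)$, and Lemma~\ref{th:boundary} then pins the common boundary to a single point $\cS_0\in\sP^1\setminus\bP^1$ with $\mu_f(\{\cS_0\})=\mu_f(\sJ(f))=1>0$, contradicting no potentially good reductions (via the stated criterion $\mu_f(\{\cS\})=0$ for all $\cS\in\sP^1\setminus\bP^1$).

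For part (ii), let $W$ be cyclic of exact period $p$ with $f^{-1}(W)\neq W$, and let $U$ be a component of $\bigcup_{n\geq 0}f^{-n}(W)$; then $U$ is a component of $f^{-n_0}(W)$ for some minimal $n_0\geq 0$, and $f^{n_0}$ maps $U$ properly onto $W$, so $f^{n_0}(\partial U)\subset\partial W$ and hence $\mu_f(\partial U)\leq C\cdot\mu_f(\partial W)$ for a constant $C$ depending only on degrees (using the $f$-balanced property $f^*\mu_f=d\mu_f$, which gives $\mu_f(f^{-1}(A))= \text{(local-degree-weighted count)}$ and in particular $\mu_f(f^{-1}(A))\geq \mu_f(A)$ pointwise on components, so $\mu_f(A)=0\Rightarrow\mu_f(f^{-1}(A))=0$, and conversely a proper map of degree $\leq d^{n_0}$ gives $\mu_f(\partial U)\leq d^{n_0}\mu_f(\partial W)$). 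Thus it suffices to prove $\mu_f(\partial W)=0$. Now apply part (i) with $U=W$: since $f^{-1}(W)\neq W$, we get $\partial W\neq\sJ(f)$, so there is $\cS_1\in\sJ(f)\setminus\partial W$. I want to upgrade "$\partial W$ is a proper subset of $\sJ(f)$" to "$\mu_f(\partial W)=0$". Here I would use invariance: $f^p(W)=W$ with $f^p$ injective on $W$ (if $W$ is a \emph{domaine singulier}) or in general $f^p$ proper onto $W$, so $\partial W$ is backward-and-forward related to itself under $f^p$; combined with $f^*\mu_f=d\mu_f$ and the ergodicity/mixing of $\mu_f$ under $f$, a set $A=\partial W$ with $f^{-p}(A)\supset A$ and $\mu_f(A)>0$ would have to be (mod $\mu_f$-null) the whole of $\sJ(f)$, contradicting $\cS_1\notin\partial W$ provided $\cS_1$ is not itself $\mu_f$-null in a neighborhood — which holds because $\supp\mu_f=\sJ(f)$, so every open neighborhood of $\cS_1$ in $\sJ(f)$ has positive $\mu_f$-mass, and $\partial W$ being closed and missing $\cS_1$ misses such a neighborhood.

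The main obstacle I expect is the ergodicity step in (ii): turning "$\partial W\subsetneq\sJ(f)$" into "$\mu_f(\partial W)=0$". The boundary $\partial W$ is closed but need not be forward-invariant (only $f^p(\partial W)\subset\partial W$ with $f^{-p}(\partial W)\cap W$-side controlled), so one must be careful about which invariance to feed into mixing. A robust fix: observe that $\partial W$ has empty interior in $\sP^1$ (it is the boundary of the open set $W$), and on a tree the boundary of a component of an open set is contained in a countable set of \emph{endpoints/branch points} of the relevant arcs; more concretely, $\partial W\subset\overline{\sP^1\setminus\bP^1}$ consists of type II/III points, and combining part (i)'s conclusion with Lemma~\ref{th:boundary} applied to $W$ and each neighboring Fatou component shows each point of $\partial W$ is a single branch point separating $W$ from a neighbor; if $\mu_f(\partial W)>0$ then $\mu_f$ has an atom at some such $\cS_0\in\sP^1\setminus\bP^1$ (since $\mu_f(\partial W)>0$ and $\partial W$ can be taken at worst countable, being the set of frontier branch points, so countable additivity forces an atom), directly contradicting no potentially good reductions. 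I would develop this "countability of $\partial W$" argument as the spine of (ii), using the tree structure and Lemma~\ref{th:boundary}, rather than invoking mixing at all.
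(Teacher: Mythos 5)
Your part (i) assembles the right pieces but the final step does not close. From Lemma~\ref{th:boundary} applied to $U$ and a disjoint Fatou component $V$ with $\partial V\subset\sJ(f)=\partial U$, you correctly get that $\partial U\cap\partial V=\partial V$ is a singleton $\{\cS_0\}$ in $\sP^1\setminus\bP^1$; but you then assert $\mu_f(\{\cS_0\})=\mu_f(\sJ(f))$, which presupposes $\sJ(f)=\{\cS_0\}$, i.e.\ that $\partial U$ (not merely $\partial V$) is a singleton. Nothing in the topology forces this: a priori $\sJ(f)=\partial U$ could be a large set while the \emph{other} component $V$ hangs off a single point of it, so your claim that the existence of any second Fatou component already yields an atom of full mass is unjustified (as is the intermediate claim that $\partial U=\sJ(f)$ forces $\sF(f)=U$). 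The missing step is dynamical: take $V$ to be a component of $f^{-1}(U)$ other than $U$ (such a component exists precisely because $f^{-1}(U)\neq U$); then $f$ maps $V$ properly onto $U$, so $f(\partial V)=\partial U$, and $\partial V=\{\cS_0\}$ forces $\partial U=\{f(\cS_0)\}$, whence $\sJ(f)=\supp\mu_f$ is a single point of $\sP^1\setminus\bP^1$ and $\mu_f$ has an atom there --- the desired contradiction with no potentially good reductions. You mention preimage components at the start of the paragraph but drop them in your ``cleaner route''; they are indispensable.

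Part (ii) has a more serious gap. Your ``robust fix'' rests on the claim that $\partial W$ is at worst countable (``the set of frontier branch points''), so that $\mu_f(\partial W)>0$ would force an atom. This is not justified and is false in general: the boundary of a domain in the tree $\sP^1$ can be uncountable (for instance, the complement of a Cantor set of classical or type~II points is a domain with that Cantor set as boundary); Lemma~\ref{th:boundary} only controls the intersection $\partial U\cap\partial V$ for two \emph{disjoint} domains, not the cardinality of a single boundary. Your first attempt via mixing also does not close, as you yourself note: $\partial W$ is not totally invariant, and passing to $\bigcup_{n}f^{-pn}(\partial W)$ gives a set of measure $0$ or $1$ with no contradiction, since its closure containing $\sJ(f)$ is automatic. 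The actual argument is a measure-counting one: for each $n$, the $f$-invariance of $\mu_f$ gives $\mu_f(\partial W)=\mu_f(f^{-pn}(\partial W))=\sum_{U}\mu_f(\partial U)$, the sum running over the components $U$ of $f^{-pn}(W)$, because by Lemma~\ref{th:boundary} the pairwise overlaps $\partial U\cap\partial V$ are singletons in $\sP^1\setminus\bP^1$, hence $\mu_f$-null under the no potentially good reductions hypothesis. Since $W$ itself occurs among these components, every other summand must vanish; consequently $\mu_f\bigl(\bigcup_{n}f^{-pn}(\partial W)\bigr)=\mu_f(\partial W)$, ergodicity of $\mu_f$ yields $\mu_f(\partial W)\in\{0,1\}$, and the value $1$ would force the closed set $\partial W$ to contain $\supp\mu_f=\sJ(f)$, i.e.\ $\partial W=\sJ(f)$, contradicting (i). This also disposes of all components of $\bigcup_{n}f^{-n}(W)$ at once, so your separate pushforward estimate $\mu_f(\partial U)\le d^{n_0}\mu_f(\partial W)$, while harmless, is not needed.
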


\begin{proof}
Let us see (i).
Let $U$ be a Berkovich Fatou component of $f$, and suppose not only
$f^{-1}(U)\neq U$ but, to the contrary, also $\partial U=\sJ(f)$. 
Pick a component $V$ of $f^{-1}(U)$ other than $U$. 
Then $U\cap V=\emptyset$ and $\partial U\cap\partial V=\sJ(f)\cap\partial V\neq\emptyset$, so by Lemma \ref{th:boundary} and $\partial U=\sJ(f)$,
there is $\cS_0\in\sP^1\setminus\bP^1$ such that 
\begin{gather*}
 \partial V=\{\cS_0\}\subset\partial U=f(\partial{V})=\{f(\cS_0)\},
\end{gather*}
and in turn $\supp\mu_f:=\sJ(f)=\partial U=\{\cS_0\}(=\{f(\cS_0)\})$. 
In particular, $\mu_f(\{\cS_0\})=1>0$, which contradicts the assumption that
$f$ has no potentially good reductions.

Let us see (ii).
Pick a cyclic Berkovich Fatou component $W$ of $f$
having the exact period $p\in\bN$ and
satisfying $f^{-1}(W)\neq W$. 
Then 
for any $n\in\bN$ and any distinct components
$U,V$ of $f^{-pn}(W)$, by Lemma \ref{th:boundary},
$\partial U\cap\partial V$
is either $\emptyset$ or a singleton in $\sP^1\setminus\bP^1$, 
the latter of which is still a $\mu_f$-null set
under the assumption that $f$ has no potentially good reductions.
Hence for every $n\in\bN$, also by the $f$-invariance of $\mu_f$ and $f^p(W)=W$,
we have
\begin{multline*}
\mu_f(\partial W)=\mu_f(f^{-pn}(\partial W))
=\sum_{U:\text{ a component of }f^{-pn}(W)}\mu_f(\partial U)\\
=\mu_f(\partial W)+\sum_{U:\text{ a component of }f^{-pn}(W)\text{ other than }W}\mu_f(\partial U),
\end{multline*}
which first concludes $\mu_f(\partial U)=0$ for every component $U$
of $f^{-pn}(W)$ other than $W$. In particular,
$\mu_f(\bigcup_{n\in\bN\cup\{0\}}f^{-pn}(\partial W))=\mu_f(\partial W)$,
which with the $f$-ergodicity of $\mu_f$ and $f^p(W)=W$ 
yields $\mu_f(\partial W)\in\{0,1\}$. 
Under the assumption that $f^{-1}(W)\neq W$,
this with (i) also concludes $\mu_f(\partial W)=0$.
\end{proof}

\section{Proof of Theorem \ref{th:apriorinonarch}}
\label{sec:apriorinonarch}

Let $K$ be an algebraically closed field of any characteristic
that is complete with respect to a non-trivial and non-archimedean
absolute value $|\cdot|$. Let $f\in K(z)$ be of degree $d>1$.

Suppose now that there are $g\in K(z)$ of degree $>0$ and
a non-empty open subset $D$ in $\bP^1$ 
such that \eqref{eq:apriorinonarchi} does not hold, or equivalently,
replacing $D$ with some component of the minimal open subset in $\sP^1$
containing the original $D$ as the dense subset,
there is a sequence $(n_j)$ in $\bN$ tending to $\infty$ as $j\to\infty$
such that
\begin{gather}
\lim_{j\to\infty}\frac{\sup_{\cS\in D}\log[f^{n_j},g]_{\can}(\cS)}{d^{n_j}+\deg g}<0.\label{eq:quicknonarch}
\end{gather}
Then $D\subset \sF(f)$ by Lemma \ref{th:Fatou}.
Let $U$ be the Berkovich Fatou component of $f$
containing $D$. Since $\deg g>0$, 
$\lim_{j\to\infty}f^{n_{j+1}-n_j}=\Id_{g(D)\cap\bP^1}$ 
uniformly as such mappings $g(D)\cap\bP^1\to(\bP^1,[z,w])$, 
and then there exists $N\in\bN\cup\{0\}$ such that 
$V:=f^{n_N}(U)(\supset g(D))$ is a {\em cyclic} Berkovich
Fatou component of $f$ (having the exact period, say, $p\in\bN$). 
Also by Rivera-Letelier's counterpart of
Fatou's classification of cyclic (Berkovich) Fatou components
(\cite[Proposition 2.16 and its {\itshape esquisse de d\'emonstration}]{FR09},
see also \cite[Remark 7.10]{Benedetto10}), in fact $V$ is a 
Berkovich {\itshape domaine singular} of $f$ (so $f^{-1}(V)\neq V$).

We have the uniform upper bound 
\begin{gather*}
 \sup_{j\in\bN}\sup_{\cS\in\sP^1}\log[f^{n_j},g]_{\can}(\cS)\le 0. 
\end{gather*}
Moreover, we also have the lower bound 
\begin{gather*}
 \limsup_{j\to\infty}
\frac{\sup_{\cS\in\sP^1}\log[f^{n_j},g]_{\can}(\cS)}{d^{n_j}+\deg g}\ge 0>-\infty; 
\end{gather*}
for, otherwise, by Lemma \ref{th:Fatou}, we must have $\sP^1=\sF(f)$. This contradicts
$\sJ(f):=\supp\mu_f\neq\emptyset$.

Hence, recalling also the uniform convergence \eqref{eq:Green} and 
the equalities \eqref{eq:canonicaldefining} and \eqref{eq:rootsnormalized},
by a version of Hartogs's lemma (cf.\ \cite[Proposition 2.18]{FR09}, \cite[Proposition 8.57]{BR10}), we can assume that there is 
a function $\phi:\sP^1\to\bR_{\le 0}\cup\{-\infty\}$ such that
\begin{gather*}
 \phi=\lim_{j\to\infty}\frac{\log[f^{n_j},g]_{\can}(\cdot)}{d^{n_j}+\deg g}
\quad\text{on }\sP^1\setminus\bP^1
\end{gather*}
and that $\Delta\phi+\mu_f=\Delta(\phi+g_F)+\delta_{\cS_{\can}}$
is a {\em probability} Radon measure on $\sP^1$,
with no loss of generality
(see \cite[\S3.4]{FR09}
for a similar computation). Since, also by \eqref{eq:Laplacianaffine}, we have 
\begin{gather*}
 \Delta(\phi+g_F+\log\max\{1,|\cdot|\})+\delta_\infty
=\Delta(\phi+g_F)+\delta_{\cS_{\can}}\ge 0\quad\text{on }\sP^1,
\end{gather*}
the function 
\begin{gather*}
 \phi(=(\text{a subharmonic function})-(\text{a continuous function})
\text{ on }\sP^1\setminus\{\infty\})
\end{gather*}
is {\em upper semicontinuous} on $\sP^1\setminus\{\infty\}$, and
in fact on $\sP^1$ by changing a projective coordinate on $\bP^1$.
The restriction $\phi|U$ is subharmonic 
since $\Delta\phi|U=(\Delta\phi+\mu_f)|U\ge 0$.
By the assumption \eqref{eq:quicknonarch},
the open subset $\{\phi<0\}$ in $\sP^1$ contains $D\setminus\bP^1\neq\emptyset$, and in turn by the maximum principle (cf.\ \cite[Proposition 8.14]{BR10})
applied to the subharmonic and non-positive $\phi|U\le 0$, we have 
\begin{gather*}
 U\subset\{\phi<0\}.
\end{gather*}
Moreover, $\{\phi<0\}\subset \sF(f)$, so that $\phi\equiv 0$ on $\sJ(f)$; for, if 
there is $\cS\in \sJ(f)\cap\{\phi<0\}$, then
for any open neighborhood $D'\Subset\{\phi<0\}$ of $\cS$, 
recalling the uniform convergence \eqref{eq:Green} and 
the equality \eqref{eq:rootsnormalized},
by a version of
Hartogs lemma (cf.\ \cite[Proposition 2.18]{FR09}, \cite[Proposition 8.57]{BR10}) already recalled in the above, we also have
\begin{gather*}
 \limsup_{j\to\infty}\sup_{D'}\frac{\log[f^{n_j},g]_{\can}(\cdot)}{d^{n_j}+\deg g}\le\sup_{D'}\phi<0.
\end{gather*}
Then $D'\subset \sF(f)$ by Lemma \ref{th:Fatou}, which contradicts
$\cS\in \sJ(f)\cap D'$.

Now assume in addition $\infty\in f^{-1}(U)\setminus U(\neq\emptyset)$ (so that 
$U\Subset\sP^1\setminus\{\infty\}$) with no loss of generality,
by changing a projective coordinate 
on $\bP^1$ if necessary. Let us define the function
\begin{gather*}
 \psi:=\begin{cases}
	\phi & \text{on }U\\
	0 & \text{on }\sP^1\setminus U
       \end{cases}:\sP^1\to\bR_{\le 0}\cup\{-\infty\}.
\end{gather*}
The function $\psi+g_F+\log\max\{1,|\cdot|\}$ 
on $\sP^1\setminus\{\infty\}$ is not only 
upper semicontinuous on $\sP^1\setminus\{\infty\}$ (since 
so is $\phi+g_F+\log\max\{1,|\cdot|\}$,
$\psi=\phi=0$ on $\partial U\subset \sJ(f)$, and $\phi\le 0$)
but also subharmonic on 
both $U$ and $\sP^1\setminus(\overline{U}\cup\{\infty\})$, 
and is indeed subharmonic (or equivalently,
{\itshape domination} subharmonic \cite[\S8.2, \S7.3]{BR10})
on $\sP^1\setminus\{\infty\}$ (since so is $\phi+g_F+\log\max\{1,|\cdot|\}$, 
$\psi=\phi=0$ on $\partial U\subset \sJ(f)$, and $\phi\le 0$).
Consequently, also by \eqref{eq:Laplacianaffine},
we have the {\itshape probability} Radon measure
\begin{gather*}
 \Delta\psi+\mu_f=\Delta(\psi+g_F+\log\max\{1,|\cdot|\})+\delta_{\infty}\quad\text{on }\sP^1. 
\end{gather*}

Now suppose to the contrary that $f$ has no potentially good reductions. 
Then we claim that $\Delta\psi=0$ on $\sP^1$;
for, under this assumption, we first have $\mu_f(\partial U)=0$ by 
($f^{-1}(V)\neq V=f^p(V)$ and) Lemma \ref{th:boundarynonarch}. Next,
by the definition of $\psi$, we have $\Delta\psi=0$ on $\sP^1\setminus\overline{U}$,
or equivalently, $\Delta\psi+\mu_f=\mu_f$ on $\sP^1\setminus\overline{U}$. 
Finally, also by 
$U\subset\sP^1\setminus \sJ(f)=\sP^1\setminus\supp\mu_f$ 
and $\mu_f(\partial U)=0$,
we have
\begin{multline*}
 (\Delta\psi+\mu_f)(\overline{U})
=1-(\Delta\psi+\mu_f)(\sP^1\setminus\overline{U})
=1-\mu_f(\sP^1\setminus\overline{U})=\mu_f(\overline{U})=0.
\end{multline*}
Consequently, $\Delta\psi+\mu_f=\mu_f$ on $\sP^1$, 
or equivalently, 
the claim holds.

Once this claim is at our disposal, 
$\psi$ is constant on $\sP^1\setminus\bP^1$, 
which with $\psi:=0$ on $\sP^1\setminus U(\supset f^{-1}(U)\setminus U\neq\emptyset)$
yields $\psi\equiv 0$ on $\sP^1$.
Then we must have $\phi=:\psi\equiv 0$ on $U(\subset\{\phi<0\}$),
which is a contradiction. \qed

\section{Proof of Theorem \ref{th:equidistpositive}}
\label{sec:equidistnonarch}
Let $K$ be an algebraically closed field of any characteristic
that is complete with respect to a non-trivial and non-archimedean
absolute value $|\cdot|$. 
Let $f\in K(z)$ be of degree $d>1$ and $g\in K(z)$ be of degree $>0$,
and fix a lift of $F$.
By \eqref{eq:Green}, \eqref{eq:canonicaldefining}, \eqref{eq:rootsnormalized}, 
and a continuity of the Laplacian $\Delta$,
the equidistribution \eqref{eq:equidistmoving} would follow from
\begin{gather}
 \lim_{n\to\infty}\frac{\log[f^n,g]_{\can}(\cdot)}{d^n+\deg g}=0\quad\text{on }\sP^1\setminus\bP^1.\tag{\ref{eq:equidistmoving}$'$}\label{eq:potentialnonarch}
\end{gather}
Unless \eqref{eq:potentialnonarch} holds,
by an argument similar to that in the previous section
involving a version of Hartogs's lemma (cf.\ \cite[Proposition 2.18]{FR09}, \cite[Proposition 8.57]{BR10}), there exist
a sequence $(n_j)$ in $\bN$ tending to $\infty$ as $j\to\infty$
and a function $\phi:\sP^1\to\bR_{\le 0}\cup\{-\infty\}$ such that
\begin{gather*}
 \phi:=\lim_{j\to\infty}\frac{\log[f^{n_j},g]_{\can}(\cdot)}{d^{n_j}+\deg g}
\quad\text{on }\sP^1\setminus\bP^1,
\end{gather*}
that $\Delta\phi+\mu_f=\Delta(\phi+g_F)+\delta_{\cS_{\can}}$ is 
a {\em probability} Radon measure on $\sP^1$, 
and that $\{\phi<0\}$ is a {\em non-empty and open} subset in $\sP^1$.
For any domain $D'\Subset\{\phi<0\}$, 
recalling the uniform convergence \eqref{eq:Green} and 
the equality \eqref{eq:rootsnormalized},
by a version of Hartogs lemma (cf.\ \cite[Proposition 2.18]{FR09}, \cite[Proposition 8.57]{BR10}), we must have
\begin{gather*}
 \limsup_{j\to\infty}\sup_{\cS\in D'}\frac{\log[f^{n_j},g]_{\can}(\cS)}{d^{n_j}+\deg g}
\le\sup_{D'}\phi<0.
\end{gather*}
This is impossible if $f$ has no potentially good reductions,
by Theorem \ref{th:apriorinonarch}. \qed

\begin{remark}
The difference between the proofs of Theorem \ref{th:equidistpositive}
and Favre--Rivera-Letelier's \cite[Th\'eor\`eme B]{FR09} 
(in the $\operatorname{char}K=0$ case but without 
the no potentially good reductions assumption) is caused by the fact that
when $\operatorname{char}K>0$, no (geometric) structure theorems 
on {\itshape quasiperiodicity domains} 
(containing $g(D)$ in the proof of Theorem \ref{th:apriorinonarch}), 
which are subsets of {\itshape domaines singuliers} (appearing as $V$ in
the proof of Theorem \ref{th:apriorinonarch}), have been known.
\end{remark}

\begin{acknowledgement}
 This research was partially supported by JSPS Grant-in-Aid 
 for Scientific Research (C), 15K04924.
\end{acknowledgement}

\def\cprime{$'$}

\end{document}